\documentclass[12pt]{article}
\usepackage{amsfonts}
\usepackage{amsmath}
\usepackage{amsthm}

\newtheorem{theorem}{Theorem}[section]
\newtheorem{lemma}[theorem]{Lemma}

\title{On Coloring the Odd-Distance Graph}

\author{Jacob Steinhardt \\
\small \texttt{jsteinha@mit.edu}}

\date{August 10, 2009 \\
\small Mathematics Subject Classification: 05C15}

\begin{document}

\maketitle

\begin{abstract}
We present a proof, using spectral techniques, that there is no finite measurable coloring of the odd-distance graph.
\end{abstract}

\section{Introduction}

Let $\mathcal{O}$ be the graph with $V(\mathcal{O}) = \mathbb{R}^2$ and where two vertices are connected if they are at an odd distance from each other. We call $\mathcal{O}$ the \textit{odd-distance graph}. Let the \emph{measurable chromatic number} of a graph denote the least number of colors needed to color a graph such that each color class is measurable. We aim to show that the measurable chromatic number $\chi$ of $\mathcal{O}$ is infinite. We will do this by defining a sequence of operators $B_\alpha$ related to the adjacency operator of $\mathcal{O}$. We then use an extension of the well-known spectral inequality, $\chi(G) \geq 1-\frac{\lambda_{\max}}{\lambda_{\min}}$, to the infinite-dimensional case. We next determine the set of eigenfunctions for $B_{\alpha}$ (they turn out to be the characters of $\mathbb{R}^2$, though this is in a sense guaranteed from the Fourier analysis on $\mathbb{R}^2$). This gives us the full set of eigenvalues for the $B_\alpha$, which we then bound below in order to show that $1-\frac{\lambda_{\max}}{\lambda_{\min}}$ goes to $\infty$ as $\alpha$ goes to $1$. As this is a lower bound for $\chi(\mathcal{O})$, we will then have established that $\mathcal{O}$ has infinite measurable chromatic number. Throughout the paper, whenever we refer to chromatic number we will always mean measurable chromatic number.

This result has been proven elsewhere, for example as a consequence of the theorem that all measurable sets with positive density at infinity contain all sufficiently large distances (see e.g. \cite{Fal}). However, the proof in this paper uses primarily techniques from spectral graph theory rather than measure theory and seems to be closer in spirit to the problem itself. See also \cite{Bac} where a similar generalization of the Lovasz theta function is studied and used to derive new bounds on the chromatic number of unit-distance graphs in $\mathbb{R}^n$.

Section $2$ generalizes Hoffman's eigenvalue bound (see e.g. \cite{Lov}) to the case of a family of weighted adjacency matrices for $\mathcal{O}$. This family is paramterized by a real number $\alpha$, $\alpha < 1$. Section $3$ then shows that this gives a bound of $\Omega((\alpha-1)^{\frac{-3}{4}})$ on $\chi$, so in particular the bound goes to $\infty$ as $\alpha$ goes to $1$. This implies that $\chi(\mathcal{O})$ is infinite. Section $4$ consists of concluding remarks and possible ideas for generalizing this technique to deal with non-measurable colorings.

\section{A Generalization of Hoffman's Bound}

Consider the operator $B_{\alpha} : L^2(\mathbb{R}^2) \to L^2(\mathbb{R}^2)$ defined by

\begin{equation}
\label{bdef1}
(B_{\alpha}f)(x,y) = \int_{-\pi}^{\pi} \sum_{k=0}^{\infty} \alpha^{-k} f(x+(2k+1)\cos(\theta),y+(2k+1)\sin(\theta)) d\theta
\end{equation}

Clearly, $B_{\alpha}$ is a linear operator. We also make the following observation:

\begin{lemma}
\label{zprod1}
Let $I$ be an independent set in $\mathcal{O}$, and let $g$ be any function that is zero outside of $I$. Then $\langle f,B_{\alpha}f\rangle = 0$. 
\end{lemma}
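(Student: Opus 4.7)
The plan is to treat this as a support argument: the statement (with the evident typo where $f$ should be $g$) follows from the fact that the integrand defining $\langle g, B_\alpha g\rangle$ pairs values of $g$ at two points at odd Euclidean distance, which cannot both lie in the independent set $I$.

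First I would unfold the definition. Writing out $\langle g, B_\alpha g\rangle$ directly from \eqref{bdef1} gives
\[
\langle g, B_\alpha g\rangle = \int_{\mathbb{R}^2} \overline{g(x,y)} \int_{-\pi}^{\pi} \sum_{k=0}^{\infty} \alpha^{-k}\, g\bigl(x+(2k+1)\cos\theta,\,y+(2k+1)\sin\theta\bigr)\,d\theta\,dx\,dy.
\]
For each fixed $k \ge 0$ and $\theta \in [-\pi,\pi]$, the inner point $(x',y') := (x+(2k+1)\cos\theta,\,y+(2k+1)\sin\theta)$ lies at Euclidean distance exactly $2k+1$ from $(x,y)$, which is an odd positive integer. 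Hence $(x,y)$ and $(x',y')$ are adjacent in $\mathcal{O}$.

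Next I would use independence: because $g$ vanishes off of $I$, the product $\overline{g(x,y)}\,g(x',y')$ is nonzero only if both $(x,y)$ and $(x',y')$ lie in $I$. But $I$ is independent in $\mathcal{O}$, so no two of its points are at an odd distance, ruling this out. Thus the integrand vanishes pointwise for every $k$ and $\theta$, and the inner product is zero.

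The step that requires care is exchanging the outer integration in $(x,y)$ with the sum over $k$ and the integral in $\theta$: since $\alpha < 1$ the weights $\alpha^{-k}$ grow, so Fubini is not automatic. I would handle this by first verifying the identity on the dense subclass of bounded, compactly supported $g$ supported in $I$, where for each fixed $k$ the summand in the definition of $B_\alpha g$ is absolutely integrable against $\overline{g}$ and the pointwise argument above kills it term by term. Boundedness of $B_\alpha$ on the relevant subspace (to be established elsewhere in the paper) then extends the conclusion to general $g \in L^2$ vanishing off $I$.
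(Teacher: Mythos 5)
Your proposal matches the paper's own proof essentially verbatim: both unfold $\langle g, B_\alpha g\rangle$ from \eqref{bdef1} and observe that $\overline{g(x,y)}\,g(x+(2k+1)\cos\theta,\,y+(2k+1)\sin\theta)$ vanishes pointwise because the two points are at odd distance and so cannot both lie in the independent set $I$. Your extra paragraph on justifying the interchange of the sum over $k$ with the integrals is additional care the paper simply skips, not a different argument.
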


\begin{proof}

\begin{eqnarray*}
\label{zprod2}
\langle f,B_{\alpha}f\rangle & = & \iint_{\mathbb{R}^2} f(x,y)\overline{(B_{\alpha}f)(x,y)} dA\\
 & = & \iint_{\mathbb{R}^2} f(x,y) \int_{-\pi}^{\pi} \sum_{k=0}^{\infty} \alpha^{-k} \overline{f(x+(2k+1)\cos(\theta),y+(2k+1)\sin(\theta))} d\theta dA\\
 & = & \iint_{\mathbb{R}^2} \int_{-\pi}^{\pi} \sum_{k=0}^{\infty} \alpha^{-k} f(x,y)\overline{f(x+(2k+1)\cos(\theta),y+(2k+1)\sin(\theta))} d\theta dA\\
 & = & 0
\end{eqnarray*}

In the last equality we used the fact that 

\[
f(x,y)\overline{f(x+(2k+1)\cos(\theta),y+(2k+1)\sin(\theta))} = 0
\]

since not both $(x,y)$ and $(x+(2k+1)\cos(\theta),y+(2k+1)\sin(\theta))$ can be in $I$ (they are at odd distance), so $f$ applied to at least one of the two must be zero.
\end{proof}

We can use this to bound the chromatic number $\chi$ of $\mathcal{O}$. Let $C_\alpha = I-\frac{\alpha-1}{2\pi}B_\alpha$, where $I$ is the identity. Then $C_\alpha$ is equivalent to convolution by some function, and so is diagonalized by the Fourier transform on $\mathbb{R}^2$. Therefore, its operator norm is equal to its largest eigenvalue. We thus have the following:

\begin{lemma}
\begin{equation}
\label{bound1}
\chi \geq \frac{\rho(C_\alpha)}{\rho(C_\alpha)-1}
\end{equation}
\end{lemma}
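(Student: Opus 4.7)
The plan is to imitate the classical Hoffman eigenvalue bound, treating Lemma~\ref{zprod1} as the analog of the vanishing of the adjacency quadratic form on an independent set. Assume for contradiction a measurable $\chi$-coloring $I_1, \ldots, I_\chi$ of $\mathbb{R}^2$ exists with $\chi$ finite (otherwise the claim is vacuous). Fix a large $R > 0$, let $B_R \subset \mathbb{R}^2$ be the disk of radius $R$, and define $g_i := \mathbf{1}_{I_i \cap B_R}$ and $f := \mathbf{1}_{B_R} = \sum_{i=1}^{\chi} g_i$. Each $g_i$ is supported in the independent set $I_i$, so Lemma~\ref{zprod1} gives $\langle g_i, B_\alpha g_i\rangle = 0$, whence $\langle g_i, C_\alpha g_i\rangle = \|g_i\|^2$. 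Summing over $i$ yields $\sum_i \langle g_i, C_\alpha g_i\rangle = \|f\|^2 = |B_R|$.

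Set $\rho := \rho(C_\alpha)$. Because $C_\alpha$ is self-adjoint and $\rho$ equals $\sup_\xi |m_C(\xi)|$ where $m_C$ is its Fourier multiplier, the operator $P := \rho I - C_\alpha$ has non-negative Fourier multiplier $\rho - m_C(\xi)$ and is therefore positive semidefinite. Writing $h_i := P^{1/2} g_i$ and applying the triangle inequality followed by Cauchy--Schwarz,
\[
\langle f, P f\rangle = \Bigl\|\sum_i h_i\Bigr\|^2 \leq \Bigl(\sum_i \|h_i\|\Bigr)^2 \leq \chi \sum_i \|h_i\|^2 = \chi \sum_i \langle g_i, P g_i\rangle = \chi(\rho - 1)|B_R|.
\]

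It remains to bound $\langle f, P f\rangle = \rho|B_R| - \langle f, C_\alpha f\rangle$ from below, i.e., to bound $\langle f, C_\alpha f\rangle$ from above. A direct computation of the Fourier multiplier of $C_\alpha$ at the origin gives $m_C(0) = 1 - \frac{\alpha-1}{2\pi}\int_{-\pi}^{\pi}\sum_k \alpha^{-k}\,d\theta = 1 - \alpha$, which is strictly negative in the regime of interest. Since $f = \mathbf{1}_{B_R}$ satisfies the scaling identity $\hat{f}(\xi) = R^2 \hat{\mathbf{1}}_{B_1}(R\xi)$, the probability measure $|\hat{f}(\xi)|^2/\|f\|^2$ concentrates at $\xi = 0$ as $R \to \infty$; combined with continuity and boundedness of $m_C$, dominated convergence yields $\langle f, C_\alpha f\rangle / \|f\|^2 \to m_C(0) = 1 - \alpha < 0$. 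Hence for $R$ large enough, $\langle f, C_\alpha f\rangle \leq 0$, and the two displayed bounds combine to give $\rho|B_R| \leq \chi(\rho - 1)|B_R|$, i.e., $\chi \geq \rho/(\rho - 1)$.

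The step I expect to require the most care is the Fourier-analytic asymptotic $\langle f, C_\alpha f\rangle / \|f\|^2 \to 1 - \alpha$: everything else is essentially bookkeeping once the PSD Cauchy--Schwarz is set up. The main obstacle is therefore checking continuity and local boundedness of $m_C$ near the origin so that dominated convergence applies to the $L^2$ concentration of $|\hat{f}|^2$; given the explicit form of the kernel of $B_\alpha$, this should be routine but is where the otherwise discrete argument meets the measure-theoretic content of the hypothesis.
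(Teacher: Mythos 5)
Your proof is correct, and its first half is essentially the paper's argument in different clothing: the paper expands $\sum_{i,j}\langle f_i-f_j,\,C_\alpha(f_i-f_j)\rangle$ and bounds each term by $\|C_\alpha\|\,\|f_i-f_j\|^2$, which is exactly your positive semidefiniteness of $P=\rho I-C_\alpha$ combined with Cauchy--Schwarz (the paper's $\|A\|$ is your $\rho$, justified in both cases by $C_\alpha$ being a real Fourier multiplier). Where you genuinely diverge is in disposing of the term $\langle f,C_\alpha f\rangle/\|f\|^2$. The paper works in physical space: taking the disk radius to be an odd integer $2k+1$, it observes that circles of odd radius up to $2(k-j)-1$ centered at a point of the $j$-th annulus lie entirely inside the disk, obtains a pointwise lower bound for $(I-C_\alpha)f$ on each annulus, sums explicitly, and lets $k\to\infty$ (via a somewhat shaky L'Hopital-in-$k$ step). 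You instead work on the Fourier side: Plancherel, concentration of $|\hat{\mathbf{1}}_{B_R}|^2/\|\mathbf{1}_{B_R}\|^2$ at the origin as $R\to\infty$, and boundedness plus continuity of the multiplier $m_C$ give $\langle f,C_\alpha f\rangle/\|f\|^2\to m_C(0)$. This is cleaner and easier to make fully rigorous than the paper's computation, and the one item you defer is indeed routine: in the regime where $\sum_k\alpha^{-k}$ converges the kernel of $B_\alpha$ is a finite positive measure, so its Fourier transform is bounded by the total mass and uniformly continuous. Two bookkeeping remarks: convergence of that series (and of the geometric series summed later in the paper) forces $\alpha>1$, so your reading $m_C(0)=1-\alpha<0$ is the internally consistent one despite the paper's stated ``$\alpha<1$''; and even under a normalization making $m_C(0)=0$, your argument still closes by letting $R\to\infty$ and concluding $\chi\ge(\rho-\varepsilon)/(\rho-1)$ for every $\varepsilon>0$, so nothing essential hinges on strict negativity --- in fact your limit yields the marginally stronger bound $\chi\ge(\rho+\alpha-1)/(\rho-1)$.
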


\begin{proof}
By the preceeding comments, it suffices to show that $\chi \geq \frac{||A||}{||A||-1}$. Suppose that there exists a $\chi$-coloring of $\mathcal{O}$ with color classes $I_1,\ldots,I_{\chi}$.  Let $S_r$ be a circle with radius $r$ centered at the origin. Let $f_i$ be defined as

\begin{equation}
f_i(x) = \left\{
\begin{array}{lr}
1 & x \in I_i \cap S_r\\
0 & x \not\in I_i \cap S_r
\end{array} \right\}
\end{equation}

Let $f = f_1+\ldots+f_\chi$. We note that each $f_i$ satisfies the conditions of Lemma~\ref{zprod1}. Therefore, $\langle f_i, C_\alpha f_i \rangle = \langle f_i, f_i \rangle$. We then have:

\begin{eqnarray*}
2(\chi-1)||A||||f||^2 & = & \sum_{i,j=1}^{\chi} ||A||||f_i-f_j||^2\\
 & \geq & \sum_{i,j=1}^{\chi} \langle f_i-f_j, C_{\alpha}(f_i-f_j) \rangle\\
 & = & \sum_{i,j=1}^{\chi} \langle f_i,C_{\alpha}f_i \rangle + \langle f_j,C_{\alpha}f_j \rangle - \langle f_i,C_{\alpha}f_j \rangle - \langle f_j,C_{\alpha}f_i \rangle\\
 & = & \left(\sum_{i,j=1}^{\chi} ||f_i||^2+||f_j||^2\right)-2\sum_{i,j=1}^{\chi} \langle f_i,C_{\alpha}f_j \rangle\\
 & = & 2\chi||f||^2-2\langle\sum_{i=1}^{\chi} f_i,C_{\alpha}(\sum_{j=1}^{\chi} f_j\rangle\\
 & = & 2\chi||f||^2-2\langle f,C_{\alpha}f \rangle
\end{eqnarray*}

So $2(\chi-1)||A||||f||^2 \geq 2\chi||f||^2-2\langle f,C_{\alpha}f \rangle$. This re-arranges to $\chi(||A||||f||^2-||f||^2) \geq ||A||||f||^2-\langle f, C_{\alpha}f \rangle$, or $\chi \geq \frac{||A||}{||A||-1}\left(1-\frac{\langle f, C_{\alpha} f \rangle}{||f||^2}\right)$. We will bound $\langle f, C_{\alpha} f \rangle$ in terms of $\alpha$ and $r$. Let $r = 2k+1$, where $r$ is an integer. Let $D_{\alpha}=I-C_{\alpha}$. Then it suffices to show show that $\frac{\langle f, D_{\alpha} f \rangle}{||f||^2}$ approaches $1$ as $k \to \infty$. For a point at distance between $2j$ and $2j+2$ from the origin, $(D_{\alpha} f)(x,y) \geq (1-\alpha) \left(1+\alpha+\ldots+\alpha^{k-j}\right)$ for $j = 0,\ldots,k-1$. Therefore, $\langle f, D_{\alpha} f \rangle$ is bounded below by the sum 

\begin{equation}
\sum_{j=0}^{k-1} (1-\alpha^{k+1-j}) \pi ((2j+2)^2-(2j)^2)
\end{equation}

This simplifies to $\pi \left((2k)^2-8\frac{\alpha^{k+2}-(k+1)\alpha^2+k\alpha}{(\alpha-1)^2}+4\frac{\alpha^{k+1}-\alpha}{\alpha-1}\right)$. On the other hand, $||f||^2 = \pi(2k+1)^2$, so we want to look at the quantity

\begin{equation}
\frac{(2k)^2-8\frac{\alpha^{k+2}-(k+1)\alpha^2+k\alpha}{(\alpha-1)^2}+4\frac{\alpha^{k+1}-\alpha}{\alpha-1}}{(2k+1)^2}
\end{equation}

We break this up into the two quantities

\begin{equation}
\frac{(2k)^2}{(2k+1)^2}-4\frac{\alpha^{k+2}+\alpha^{k+1}-(2k+1)\alpha^2+2k\alpha-\alpha}{(2k+1)^2(\alpha-1)^2}
\end{equation}

Clearly $\frac{(2k)^2}{(2k+1)^2}$ tends to $1$ as $k \to \infty$. If the numerator of the other quantity does not tend to $\infty$, then we are done since the denominator does tend to $\infty$. Otherwise, we can use L'hopital's rule, from which we get that the second quantity tends to

\begin{equation}
4\frac{\alpha^{k+2}\ln(\alpha)+\alpha^{k+1}\ln{\alpha}-2\alpha^2+2\alpha}{(8k+4)(\alpha-1)^2}
\end{equation}

The top is clearly bounded as $k \to \infty$ (remember $\alpha < 1$), and the bottom is clearly unbounded, so this expression goes to $0$ as $k \to \infty$, so that $\frac{\langle f, D_{\alpha} f}{||f||^2}$ does indeed tend to $1$. Therefore, we can let $r \to \infty$, so that $\frac{\langle f, C_{\alpha} f \rangle}{||f||^2} \to 0$, and we get the desired bound.
\end{proof}

\section{Using the Spectral Bound}

We next compute the eigenvalues of $B_{\alpha}$ (if $\lambda$ is an eigenvalue of $B_{\alpha}$, then $1-\frac{\alpha-1}{2\pi}\lambda$ is an eigenvalue of $C_{\alpha}$). Since $B_\alpha$ is diagonalized by the Fourier transform, $f_{(r,s)}(x,y) = e^{i(rx+sy)}$ with $r,s \in \mathbb{R}$ are the eigenfunctions of $B_{\alpha}$. We see that the eigenvalue of the eigenfunction $f_{(r,s)}$ is given by

\begin{equation}
\label{eigval1}
\lambda_{(r,s)} = \int_{-\pi}^{\pi} \sum_{k=0}^{\infty} \alpha^{-k} e^{i(2k+1)(r\cos(\theta)+s\sin(\theta))} d\theta = \int_{-\pi}^{\pi} \sum_{k=0}^{\infty} \alpha^{-k} e^{i(2k+1)\sqrt{r^2+s^2}\cos(\theta+\phi)} d\theta
\end{equation}

for an appropriately chosen $\phi$. Thus we need only actually consider $\lambda_{(r,0)}$, which we from now on denote $\lambda(r)$. Then we have

\begin{equation}
\label{eigval2}
\lambda(r) = \int_{-\pi}^{\pi} \sum_{k=0}^{\infty} \alpha^{-k} \left(e^{ir\cos(\theta)}\right)^{2k+1} = \int_{-\pi}^{\pi} \frac{e^{ir\cos(\theta)}}{1-\alpha^{-1}e^{2ir\cos(\theta)}} d\theta
\end{equation}

Here we have simply summed the geometric series. Since $B_{\alpha}$ is symmetric, $\lambda(r)$ must be real. Letting $x = r\cos(\theta)$, we can take the real part of the integral:

\begin{eqnarray*}
\label{eigval3}
\lambda(r) & = & Re\left[\int_{-\pi}^{\pi} \frac{(\cos(x)+i\sin(x))(1-\alpha^{-1}\cos(2x)+i\alpha^{-1}\sin(2x))}{(1-\alpha^{-1}\cos(2x))^2+\alpha^{-2}\sin(2x)^2} d\theta\right]\\
 & = & \int_{-\pi}^{\pi} \frac{\cos(x)(1-\alpha^{-1}\cos(2x))-\alpha^{-1}\sin(x)\sin(2x)}{1+\alpha^{-2}-2\alpha^{-1}\cos(2x)} d\theta\\
 & = & \int_{-\pi}^{\pi} \alpha \frac{\alpha \cos(x)-\cos(x)\cos(2x)-\sin(x)\sin(2x)}{\alpha^2+1-2\alpha\cos(2x)} d\theta\\
 & = & \int_{-\pi}^{\pi} \alpha \frac{\alpha \cos(x)-\cos(x)}{\alpha^2+1-2\alpha\cos(2x)} d\theta \\
 & = & \int_{-\pi}^{\pi} \frac{\alpha(\alpha-1)\cos(x)}{(\alpha-1)^2+4\alpha\sin^2(x)} d\theta \\
 & = & \int_{-\pi}^{\pi} \frac{\alpha(\alpha-1)\cos(r\cos(\theta))}{(\alpha-1)^2+4\alpha\sin^2(r\cos(\theta))} d\theta
\end{eqnarray*}

In the second-to-last step, we used the identity $\cos(a-b)=\cos(a)\cos(b)+\sin(a)\sin(b)$. We will show that the magnitude of $\lambda_{\min}$ is at most $O((\alpha-1)^{-\frac{3}{4}})$, which shows that $\rho(C_\alpha) = 1+O((\alpha-1)^{\frac{1}{4}})$. This will show that as $\alpha$ approaches $1$, $\frac{\rho(C_\alpha)}{\rho(C_\alpha)-1}$ grows without bound, so that there cannot exist any finite coloring of $\mathcal{O}$.

Note that for $r \leq \frac{\pi}{2}$, $\lambda(r)$ is necessarily positive since the integrand is always positive ($\cos(r\cos(\theta))$ being the only thing that can go negative in the expression). We thus assume that $r > \frac{\pi}{2}$. It suffices to show that

\begin{equation}
\int_{0}^{\frac{\pi}{2}} \frac{(\alpha-1)\cos(r\cos(\theta))}{(\alpha-1)^2+4\alpha\sin^2(r\cos(\theta))} d\theta \geq -c(\alpha-1)^{-\frac{3}{4}}-d
\end{equation}

for all $r$ for some constants $c,d$ (as this, neglecting a factor of $4\alpha$, is clearly an upper bound for the integral above). Let $h$ be the function we are integrating. Let $\mathcal{R}_k$ denote the region for which $|h(\theta)| \geq 1$ and that contains the value of $\theta$ where $\cos(\theta) = \frac{k\pi}{r}$. Then we note that $\displaystyle|\int_{\mathcal{R}_k} h(x) dx| > |\int_{\mathcal{R}_{k-1}} h(x) dx|$ since $\cos(\theta)$ decreases faster as $\theta$ increases from $0$ to $\frac{\pi}{2}$. Also, the signs of these integrals alternate, so we can either throw out all of them or all but the first one, depending on whether the integral of $h$ across $\displaystyle\mathcal{R}_{\lfloor\frac{r}{\pi}\rfloor}$ is positive or negative. If it is positive, then we have thrown out all of the integral, except for a part where $|h(x) < 1|$, so that the remaining part of the integral is obviously bounded. Thus we will assume that the integral of $h$ across $\displaystyle\mathcal{R}_{\lfloor\frac{r}{\pi}\rfloor}$ is negative. We will bound the area of $\displaystyle\mathcal{R}_{\lfloor\frac{r}{\pi}\rfloor}$. First, we determine when

\begin{equation}
\label{bound2}
\frac{\alpha-1}{(\alpha-1)^2+4\alpha\sin^2(r\cos(\theta))} \geq 1
\end{equation}

as this is clearly a superset of the area where $h(\theta) \geq 1$. But this happens when $\alpha-1 \geq (\alpha-1)^2+4\alpha\sin^2(r\cos(\theta))$, or $\sin^2(r\cos(\theta)) \leq \frac{(\alpha-1)-(\alpha-1)^2}{4\alpha} = (\alpha-1)\frac{2-\alpha}{4\alpha} < \frac{\alpha-1}{4}$. So the area for which (\ref{bound2}) holds is contained in the area for which $\sin(r\cos(\alpha)) \in [-\frac{\sqrt{\alpha-1}}{2},\frac{\sqrt{\alpha-1}}{2}]$. On the other hand, this is contained in the area in which $r\cos(\theta)$ is within $\sqrt{\frac{\alpha-1}{2}}$ of a multiple of $\pi$, as $\sin(\sqrt{\frac{\alpha-1}{2}}) > \sqrt{\frac{\alpha-1}{2}}-\frac{(\alpha-1)^{1.5}}{12\sqrt{2}} > \frac{\sqrt{\alpha-1}}{2}$ for $\alpha-1$ small enough. So we want to find when

\begin{equation}
\label{bound3}
-\frac{1}{r}\sqrt{\frac{\alpha-1}{2}} \leq \frac{k\pi}{r}-\cos(\theta) \leq \frac{1}{r}\sqrt{\frac{\alpha-1}{2}}
\end{equation}

We claim that, if $\cos(\theta_0) = \frac{k\pi}{r}$, then it suffices to take $\theta \in [\theta_0-\frac{2\sqrt[4]{\alpha-1}}{\sqrt{r}},\theta_0+\frac{2\sqrt[4]{\alpha-1}}{\sqrt{r}}]$. First of all, if $\theta_0-\frac{\sqrt{\alpha-1}}{r} < 0$ or $\theta_0+\frac{\sqrt{\alpha-1}{r}} > \frac{\pi}{2}$, then $\theta$ is outside of our range of integration and so we are definitely covering at least the area we need on that end of the interval. Thus we may assume otherwise, and we have the following lemma:

\begin{lemma}
\label{bound4}
If $d > 0$ and $\theta,\theta+d \in [0,\frac{\pi}{2}]$, then $\cos(\theta)-\cos(\theta+d) \geq 1-\cos(d)$.
\end{lemma}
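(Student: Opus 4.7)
My plan is to reduce the inequality to a comparison of sines via the sum-to-product identity and then exploit monotonicity of $\sin$ on $[0,\pi/2]$.

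First I would rewrite both sides in product form. Using $\cos A - \cos B = 2\sin\!\bigl(\tfrac{A+B}{2}\bigr)\sin\!\bigl(\tfrac{B-A}{2}\bigr)$, the left-hand side becomes $2\sin(\theta+\tfrac{d}{2})\sin(\tfrac{d}{2})$, and applying the same identity (or the half-angle formula) to $1-\cos(d) = \cos(0)-\cos(d)$ gives $2\sin^2(\tfrac{d}{2})$. So the inequality is equivalent to
\begin{equation*}
\sin\!\Bigl(\theta+\tfrac{d}{2}\Bigr)\sin\!\Bigl(\tfrac{d}{2}\Bigr) \;\geq\; \sin^2\!\Bigl(\tfrac{d}{2}\Bigr).
\end{equation*}
Since $d > 0$ and $d \leq \pi/2$, we have $\sin(d/2) > 0$, so we may divide through and reduce to showing $\sin(\theta + d/2) \geq \sin(d/2)$.

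Next I would check that both $d/2$ and $\theta + d/2$ lie in $[0,\pi/2]$, where $\sin$ is monotonically increasing. The hypothesis $\theta \geq 0$ gives $\theta + d/2 \geq d/2 \geq 0$, and the hypothesis $\theta + d \leq \pi/2$ together with $d \geq 0$ gives $\theta + d/2 \leq \pi/2 - d/2 \leq \pi/2$. Monotonicity of $\sin$ on $[0,\pi/2]$ then yields $\sin(\theta + d/2) \geq \sin(d/2)$, completing the proof.

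There is no real obstacle here; the only thing to be careful about is the range check so that the monotonicity argument applies (one needs both $\theta+d/2 \leq \pi/2$ and $d/2 \geq 0$). As a sanity check, the statement is the familiar fact that $\cos$ is concave on $[0,\pi/2]$, so its secant slopes over intervals of fixed length $d$ are minimized at the left endpoint; the sum-to-product route is simply the cleanest way to produce an explicit algebraic inequality.
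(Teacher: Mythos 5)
Your proof is correct, and it takes a somewhat different route from the paper's. The paper argues by calculus: it differentiates $\cos(\theta)-\cos(\theta+d)$ with respect to $\theta$, observes that the derivative $\sin(\theta+d)-\sin(\theta)$ is nonnegative on the admissible range (since $\sin$ is increasing on $[0,\tfrac{\pi}{2}]$), and concludes that the minimum over legal $\theta$ is attained at $\theta=0$, where the expression is exactly $1-\cos(d)$. You instead factor both sides with the sum-to-product identity, reducing the claim to $\sin\bigl(\theta+\tfrac{d}{2}\bigr)\geq\sin\bigl(\tfrac{d}{2}\bigr)$, which follows from the same monotonicity of $\sin$. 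Both arguments rest on the same underlying fact (concavity of cosine on $[0,\tfrac{\pi}{2}]$), but yours is purely algebraic, avoids the differentiation-and-endpoint step, and incidentally sidesteps the paper's slightly loose wording (the paper calls the derivative ``clearly increasing'' when what is actually needed is that it is nonnegative, so that the original difference is increasing in $\theta$). Your two range checks --- that $\sin(\tfrac{d}{2})>0$ so the division is legitimate, and that $\tfrac{d}{2}\leq\theta+\tfrac{d}{2}\leq\tfrac{\pi}{2}$ so monotonicity applies --- are precisely the points requiring care, and you handle both correctly.
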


\begin{proof}
Take $\frac{d}{d\theta} \left[\cos(\theta)-\cos(\theta+d)\right] = \sin(\theta+d)-\sin(\theta)$. This is clearly increasing for $\theta \in [0,\frac{\pi}{2}-d]$, so we might as well take $\theta = 0$, as this gives a smaller value for $\cos(\theta)-\cos(\theta+d)$ than any legal value of $\theta$. Then we get $1-\cos(d)$ as our answer, as claimed.
\end{proof}

With Lemma~\ref{bound4} in hand, we need only show that $1-\cos(\frac{2\sqrt[4]{\alpha-1}}{\sqrt{r}}) > \frac{1}{r}\sqrt{\frac{\alpha-1}{2}}$. This is evident once again from the Taylor approximation as, for $\alpha-1$ small enough, $1-\cos(\frac{2\sqrt[4]{\alpha-1}}{\sqrt{r}}) > \frac{2\sqrt{\alpha-1}}{r}-\frac{2(\alpha-1)}{3r^2} > \frac{1}{r}\sqrt{\frac{\alpha-1}{2}}$. Thus for any given value of $k$, the area for which (\ref{bound2}) holds is at most $\frac{4\sqrt[4]{\alpha-1}}{\sqrt{r}}$. We only care about $\displaystyle\mathcal{R}_{\lfloor\frac{r}{\pi}\rfloor}$, so in particular we can take $k = \lfloor\frac{r}{\pi}\rfloor$ and the preceding argument holds. On the other hand, $\frac{\alpha-1}{(\alpha-1)^2+4\alpha\sin^2(r\cos(\theta))} < \frac{1}{\alpha-1}$, so integrating across this entire region gives us a value whose magnitude is at most $\frac{4}{\sqrt{r}(\alpha-1)^{\frac{3}{4}}}$. Integrating across the rest of the interval $[0,\frac{\pi}{2}]$ gives us a value of magnitude at most $\frac{\pi}{2}$, since we have shown that the integral across all of the remaining $\displaystyle\mathcal{R}_k$, $k < \lfloor\frac{r}{\pi}\rfloor$, must yield a positive number, and for all other portions of the interval $|h(\theta)| < 1$ by design. Also, recall that we established that $r > \frac{\pi}{2}$, so in particular $r > 1$. Thus we have that

\begin{equation}
\int_{0}^{\frac{\pi}{2}} \frac{\alpha-1}{(\alpha-1)^2+4\alpha\sin^2(r\cos(\theta))} d\theta \geq -4(\alpha-1)^{-\frac{3}{4}}-\frac{\pi}{2}
\end{equation}

as desired. This establishes that the measurable chromatic number of the odd-distance graph is infinite.

\section{Conclusion and Open Problems}

The largest remaining question is whether or not the chromatic number in the normal sense (without requiring measurability) is infinite or not. Perhaps the first thing to ask is how reliable a spectral bound is for talking about non-measurable colorings. There is a famous example of a graph in which the chromatic number depends upon the axioms of set theory adopted -- in particular, upon adopting choice versus determinacy (which states that all subsets of $\mathbb{R}^n$ are Lebesgue measurable). It is the graph with vertex set the real line where to vertices are connected if their distance is $\sqrt{2}+q$ for some rational $q$. 

We can color the connected component of $0$ with only two colors by coloring $n\sqrt{2}+q$ based on the parity of $n$. As all other connected components are translates of this one, we can then color the entire graph by taking a representative from each component and translating the coloring by that representative (this is where we use choice). However, no measurable coloring of this graph exists with even countably many colors. For a more detailed description, see the original paper by Shelah and Soifer \cite{She}.

Interestingly, an attempt to use Hoffman's bound on this graph only gives a lower bound of $2$ if we try the same strategy of weighting the edges so that the weighted degree of each vertex is finite, then letting the edge weights all tend to $1$. In a sense this is similar to considering only finite subgraphs (a la Erd\"{o}s-deBruijn), as the contribution of all but finitely many of the edges from each vertex is vanishingly small. It is still somewhat stronger, though, as we still consider all vertices while only caring about some of the edges connected to each vertex.

It would be interesting to find a graph in which there exists a non-measurable coloring smaller than that given by Hoffman's bound (or, even better, to find conditions under which Hoffman's bound is valid for all colorings). As noted in the preceding paragraph, the most obvious candidate for such a graph fails.

Finally, we consider possible ways of improving the result presented in this paper to the non-measurable case. Lovasz, in his initial paper (\cite{Lov}) on the $\vartheta$ function, gives many alternate characterizations of the Lovasz theta function, which is essentially what we are using here. It seems plausible that one of them could be made more amenable to dealing with colorings of infinite graphs. For example, if we can assign to each vertex $x$ a vector $\vec{v}_x$ such that $\langle \vec{v}_x, \vec{v}_y \rangle = 0$ whenever $x$ and $y$ are non-adjacent, then for any $\vec{c}$ the chromatic number is bounded above by $\sum_{x} |\langle \vec{v}_x, \vec{c} \rangle|^2$. This itself does not lend itself well to the case of uncountably many vertices, but for countably many vertices it seems much more plausible that it can be used to say something about the chromatic number. Thus we may make some progress by studying sublattices of $\mathcal{O}$. The author has tried this for certain sublattices, but has so far been unsuccessful.

One lattice that seems somewhat promising is the triangular tiling of the plane, that is, points of the form $a(1,0)+b(\frac{1}{2},\frac{\sqrt{3}}{2})$ for $a,b \in \mathbb{Z}$. In particular, it would be interesting if we could show using spectral techniques that the chromatic number was greater than $5$. I have tried to do this but have so far been unable to show that the appropriate generalization of the $\vartheta$ function for this graph takes on a value greater than $4$.

\section{Acknowledgements}

The author would like to thank Matt Kahle, Jonathan Kelner and Daniel Kane for helpful conversations.

\nocite{*}


\end{document}